\newtheorem{theorem}{Theorem}[section]
\newtheorem{lemma}[theorem]{Lemma}
\numberwithin{equation}{section}
\def\Q{{\mathbb {Q}}}
\def\Z{{\mathbb Z}} 
\def\C{{\bf C}}  
   \def\resp{{\rm resp.  }}
\def\house#1{\setbox1=\hbox{$\,#1\,$}%
\dimen1=\ht1 \advance\dimen1 by 2pt \dimen2=\dp1 \advance\dimen2 by 2pt
\setbox1=\hbox{\vrule height\dimen1 depth\dimen2\box1\vrule}%
\setbox1=\vbox{\hrule\box1}%
\advance\dimen1 by .4pt \ht1=\dimen1
\advance\dimen2 by .4pt \dp1=\dimen2 \box1\relax}
\def\Norm{{\rm Norm}}
  \def\eps{{\varepsilon}}
\def\build#1_#2^#3{\mathrel{\mathop{\kern 0pt#1}\limits_{#2}^{#3}}}
\def\date {le\ {\the\day}\ \ifcase\month\or
janvier\or fevrier\or mars\or avril\or mai\or juin\or juillet\or
ao\^ut\or septembre\or octobre\or novembre\or
d\'ecembre\fi\ {\oldstyle\the\year}}
\font\fivegoth=eufm5 \font\sevengoth=eufm7 \font\tengoth=eufm10
\def \C{\mathbb{C}}
\def \Q{\mathbb{Q}}
\def \Z{\mathbb{Z}}
\def\cN{{\mathcal{N}}}
\def \Norm{{\rm Norm}}
\def\mueff{{\mu_{\rm eff}}} 
\def\nueff{{\nu_{\rm eff}}}
\def\den{{\rm den}}
\begin{document}

\title{Fractional parts of powers of real algebraic numbers}
\author{ Yann Bugeaud} 
\address{Universit\'e de Strasbourg, Math\'ematiques,
7, rue Ren\'e Descartes, 67084 Strasbourg  (France)}
\email{bugeaud@math.unistra.fr}

\subjclass[2010] {Primary  11J68; Secondary 11J86, 11R06}
\keywords{Approximation to algebraic numbers, Linear forms in logarithms, Pisot number.}
\bigskip
\begin{abstract} 
Let $\alpha$ be a real number greater than $1$. 
We establish an effective lower bound for the distance between 
an integral power of $\alpha$ and its nearest integer. 
\end{abstract}
\maketitle

\section{introduction}

For a real number $x$, let 
$$
||x|| = \mbox{min}\{|x-m|:m\in\mathbb{Z}\}
$$
denote its distance to the nearest integer. 
Not much is known on the distribution of the sequence $(\| \alpha^n \|)_{n \ge 1}$ 
for a given real number $\alpha$ greater than $1$. 
For example, we do not know whether 
the sequence $(\| (3/2)^n \|)_{n \ge 1}$ is dense in $[0, 1/2]$, nor whether $\| {\rm e}^n \|$ tends to $0$
 as $n$ tends to infinity. In 1957 Mahler \cite{Mah57} applied Ridout's $p$-adic extension 
 of Roth's theorem to prove the following result. 

\begin{theorem}\label{Mah}
Let $r/s$ be a rational number greater than $1$ and which is not an integer. 
Let $\eps$ be a positive real number. 
Then, there exists an integer $n_0$ such that 
$$
\| (r/s)^n \| > s^{- \eps n}, 
$$
for every integer $n$ exceeding $n_0$. 
\end{theorem}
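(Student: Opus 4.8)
The plan is to argue by contradiction, reduce the statement to a lower bound for an integer quantity, and then feed that into Ridout's theorem. Write $r/s$ in lowest terms, so that $\gcd(r,s)=1$ and $s\ge 2$, and for each $n$ let $A_n$ be the nearest integer to $(r/s)^n=r^n/s^n$. Setting $m_n=r^n-A_ns^n\in\Z$, one has $\|(r/s)^n\|=|m_n|/s^n$, and $m_n\ne 0$ because $\gcd(r,s)=1$ together with $s\ge 2$ force $s^n\nmid r^n$; hence $|m_n|\ge 1$. The desired conclusion $\|(r/s)^n\|>s^{-\eps n}$ is therefore equivalent to $|m_n|>s^{(1-\eps)n}$, and I would assume for contradiction that $0<|m_n|\le s^{(1-\eps)n}$ for all $n$ in some infinite set $\mathcal N$.

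Geometrically this says that the rational number $r^n/s^n$, which is in lowest terms with denominator $s^n$ supported on the fixed finite set $S$ of primes dividing $s$, lies within $s^{-\eps n}$ of the integer $A_n$. This is an abnormally good rational approximation whose denominator is built only from the primes of $S$, which is precisely the situation Ridout's $p$-adic refinement of Roth's theorem is designed to exploit: at each $p\in S$ the non-archimedean size $|r^n/s^n|_p=p^{n v_p(s)}$ is large, and it is these $p$-adic contributions that let one improve on Roth's exponent $2$ and extract the strong saving $s^{-\eps n}$ rather than the trivial $s^{-n}$. The plan is thus to package the data $(A_n,m_n)_{n\in\mathcal N}$ as a sequence of excellent approximations, with $S$-restricted denominators, to a single fixed real algebraic number, and to apply Ridout's theorem to deduce that only finitely many such $n$ can exist, contradicting the infinitude of $\mathcal N$.

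The hard part will be that the target integer $A_n$ varies with $n$, so the distance to the nearest integer is not literally a good approximation to one fixed algebraic number; arranging a single Diophantine target together with a fixed prime set, and verifying that the resulting approximation exponent beats Ridout's threshold, is the technical heart of the argument and the place where the hypothesis that $r/s$ is not an integer is genuinely used. A useful complementary observation, which explains why a purely elementary argument does not suffice and Ridout is really needed, is the congruence obtained by comparing two powers: for $n<n'$ in $\mathcal N$ one has $s^n\mid(r^{n'-n}m_n-m_{n'})$, and when $n'-n$ is small relative to $n$ the size bound $|r^{n'-n}m_n-m_{n'}|<s^n$ forces $m_{n'}=r^{n'-n}m_n$. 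This pins down the $m_n$ inside any multiplicatively narrow cluster of good exponents, but a sparse set such as $\mathcal N=\{2^k\}$ contains no such clusters, so one genuinely needs the strength of Ridout's theorem to rule out an arbitrary infinite $\mathcal N$. Because Roth's and Ridout's theorems are ineffective, the resulting threshold $n_0$ is ineffective as well.
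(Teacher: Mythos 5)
The paper itself gives no proof of Theorem \ref{Mah}: it is quoted from Mahler's 1957 paper, so there is nothing in the text to compare your argument against, and your proposal must stand on its own. It does not, because the step you yourself label ``the technical heart of the argument'' --- producing a single fixed algebraic target to which Ridout's theorem applies --- is exactly where the entire content of the theorem lies, and you leave it open. The part you do carry out (writing $\|(r/s)^n\| = |m_n|/s^n$ with $m_n = r^n - A_n s^n \neq 0$, and reformulating the claim as $|m_n| > s^{(1-\eps)n}$) is correct but elementary, and the version of Ridout you describe (approximations $p/q$ to a fixed algebraic number with $q$ supported on the primes of $s$) cannot be applied to the data as you have packaged it: both $r^n/s^n$ and $A_n$ tend to infinity with $n$, so neither is approximating a fixed number.

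The missing idea is a normalization. From $r^n = A_n s^n + m_n$ one gets $|1 - A_n s^n/r^n| = |m_n|/r^n \le s^{(1-\eps)n}/r^n = (r^n)^{-\kappa_{\eps}}$ with $\kappa_{\eps} = 1 - (1-\eps)(\log s)/(\log r)$, so the rationals $A_n s^n/r^n$ are exceptionally good approximations to the fixed (algebraic) number $1$. One then needs the full two-sided form of Ridout's theorem, in which the prime factors of \emph{both} numerator and denominator are restricted: the denominator $r^n$ is built entirely from the primes dividing $r$ (free-part exponent $\nu = 0$), while the numerator $A_n s^n$ has free part $A_n \approx (r/s)^n$, giving $\mu = 1 - (\log s)/(\log r)$. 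Since $\kappa_{\eps} = \mu + \nu + \eps(\log s)/(\log r) > \mu + \nu$, Ridout's theorem admits only finitely many such approximations, the desired contradiction; this is also where $s \ge 2$ (i.e.\ $r/s$ not an integer) enters, since otherwise $\eps(\log s)/(\log r) = 0$ and there is no saving. Note that the denominator-only version of Ridout you invoke would not suffice even after this normalization, precisely because the gain comes from the factor $s^n$ sitting in the \emph{numerator} of $A_n s^n/r^n$. Your closing congruence observation ($s^n \mid r^{n'-n}m_n - m_{n'}$) is correct but peripheral.
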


In a breakthrough paper, Corvaja and Zannier \cite{CZ04} 
applied ingeniously the $p$-adic Schmidt Subspace Theorem to 
extend Theorem \ref{Mah} to real algebraic numbers. 
Recall that a Pisot number is a real algebraic integer greater than $1$ with the property 
that all of its Galois conjugates (except itself) lie in the open unit disc.

\begin{theorem}\label{CZ}
Let $\alpha$ be a real algebraic number greater than $1$ and $\eps$ a positive real number. 
If there are no positive integers $h$ such that the real number $\alpha^h$ is a Pisot number, then 
there exists an integer $n_0$ such that 
$$
\| \alpha^n \| > \alpha^{- \eps n}, 
$$
for every integer $n$ exceeding $n_0$. 
\end{theorem}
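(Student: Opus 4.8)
The plan is to argue by contradiction using the $p$-adic Subspace Theorem, converting a single very good approximation into a linear relation among the conjugates of $\alpha$ that, once it persists for infinitely many $n$, forces a power of $\alpha$ to be Pisot.

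Suppose then that $\|\alpha^n\|\le\alpha^{-\eps n}$ for infinitely many $n$, and let $p_n\in\Z$ be a nearest integer to $\alpha^n$, so that $|\alpha^n-p_n|\le\alpha^{-\eps n}$. The naive idea of exploiting the linear recurrence satisfied by $(\alpha^n)_n$ fails, because the good $n$ need not be consecutive; this sparsity is precisely what the Subspace Theorem overcomes. Let $d=\deg\alpha$ and, assuming first that $\alpha$ is an algebraic integer, write $\alpha^n=\sum_{j=0}^{d-1}r_{n,j}\alpha^j$ with $r_{n,j}\in\Z$. To each good $n$ I attach the integer vector $\x_n=(r_{n,0},\dots,r_{n,d-1},p_n)\in\Z^{d+1}$, whose logarithmic height grows linearly in $n$.

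Let $\K$ be the Galois closure of $\Q(\alpha)$ and let $S$ consist of the archimedean places together with the finite places dividing the leading coefficient of the minimal polynomial of $\alpha$ (these last are empty in the integer case but carry the argument when $\alpha$ is not an algebraic integer, in the spirit of Ridout's theorem). At every place of $S$ I take the $d+1$ coordinate forms $X_0,\dots,X_{d-1},Y$, except at the distinguished real place $v_0$ where $\alpha$ is the chosen root: there I replace $Y$ by $L_\infty(\mathbf X,Y)=\sum_{j=0}^{d-1}\alpha^j X_j-Y$, which is independent of $X_0,\dots,X_{d-1}$ and satisfies $L_\infty(\x_n)=\alpha^n-p_n$. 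Since $\x_n$ is integral, the only genuinely small factor in the double product is $|L_\infty(\x_n)|_{v_0}\le\alpha^{-\eps n}$, and a routine height computation bounds the whole double product by $H(\x_n)^{-\delta}$ for some $\delta=\delta(\eps)>0$. The Subspace Theorem then places all the $\x_n$ in finitely many proper subspaces of $\Q^{d+1}$, so some fixed nonzero $(c_0,\dots,c_{d-1},c)$ satisfies $\sum_j c_j r_{n,j}+c\,p_n=0$ for infinitely many good $n$.

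To read off the conclusion I express the functional $\xi\mapsto\sum_j c_j\,(j\text{-th coordinate of }\xi)$ as $\xi\mapsto\Tr_{\Q(\alpha)/\Q}(\gamma\xi)$ for a fixed $\gamma\in\Q(\alpha)$, using nondegeneracy of the trace form. With $\gamma_i=\sigma_i(\gamma)$ and $\alpha_i=\sigma_i(\alpha)$ over the embeddings $\sigma_i$, the relation becomes $\sum_{i=1}^d\gamma_i\alpha_i^n=-c\,p_n$ for infinitely many $n$. Since $p_n=\alpha^n+O(\alpha^{-\eps n})$, comparing the growth of the two sides along the good $n$ pins down the contribution of the conjugates of largest modulus and shows that the residual part of the sum is $O(\alpha^{-\eps n})$; a rigidity analysis of this exponential sum then forces the surviving conjugates into the open unit disc, so that $\alpha^h$ is Pisot for a suitable $h$, contrary to hypothesis. (When $\alpha$ is not an algebraic integer the finite places of $S$ supply extra $p$-adic smallness and the same analysis yields an outright contradiction, consistent with the fact that no power of such an $\alpha$ can be Pisot.) The \textbf{main obstacle} is exactly this interpretation step: conjugates of modulus $\ge\alpha$, including complex pairs of modulus precisely $\alpha$, must be handled by a growth/oscillation argument, and the degenerate case $c=0$, where the relation reads $\sum_i\gamma_i\alpha_i^n=0$ infinitely often, must be resolved by a vanishing-subsum argument that lowers $d$ and feeds an induction. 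Assembling these to conclude specifically that some $\alpha^h$ is Pisot, rather than merely that the subdominant conjugates are small, is where the real work lies.
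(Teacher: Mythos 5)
First, a remark on scope: the paper contains no proof of Theorem \ref{CZ}. It is quoted from Corvaja and Zannier \cite{CZ04} as a known (and ineffective) result, so there is no internal argument to compare yours against; I can only assess your sketch on its own terms.

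The decisive gap is in your application of the Subspace Theorem. With your choice of data --- the rational integer vector $(r_{n,0},\dots,r_{n,d-1},p_n)$ coming from the power basis, coordinate forms at every place of $S$, and the single modified form $L_\infty=\sum_j\alpha^jX_j-Y$ at $v_0$ --- the ``routine height computation'' you invoke does not give $H(\mathbf{x}_n)^{-\delta}$. At the distinguished place the product of the forms is $\bigl(\prod_j|r_{n,j}|\bigr)\cdot|\alpha^n-p_n|\le H(\mathbf{x}_n)^{d}\,\alpha^{-\eps n}$, and the finite places of $S$ contribute at most $1$, so the whole double product is of order $H(\mathbf{x}_n)^{d}\,\alpha^{-\eps n}$. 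Since $H(\mathbf{x}_n)\gg\alpha^{n}$ (already $|p_n|\asymp\alpha^{n}$, and inverting the Vandermonde system $\alpha_i^n=\sum_jr_{n,j}\alpha_i^j$ gives $\max_j|r_{n,j}|\asymp\max_i|\alpha_i|^{n}$), this is roughly $H(\mathbf{x}_n)^{\,d-\eps}$: one small factor $\alpha^{-\eps n}$ cannot offset $d$ coordinate factors each of size about $\alpha^{n}$. For $\eps<d$ the Subspace hypothesis is simply not met, and what your setup recovers is only the Liouville bound (1.1). This is precisely the obstruction that made the problem resistant to a naive Ridout/Subspace attack, and overcoming it is the actual content of \cite{CZ04}: Corvaja and Zannier do not use the rational coordinates of $\alpha^n$ at all, but work in the Galois closure with a vector built from $p_n$ and the powers $\alpha_i^n$ of only those conjugates of modulus at least $1$ (keeping the height under control), and extract the missing exponential saving from the behaviour of these conjugate coordinates at the non-distinguished places together with the near-integrality of the trace of $\alpha^n$. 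None of that mechanism is present in your sketch. The further difficulties you flag yourself --- the rigidity analysis of the resulting exponential sum, the degenerate case $c=0$, and the upgrade from ``subdominant conjugates are small'' to ``some $\alpha^h$ is Pisot'' (which in \cite{CZ04} passes through the notion of pseudo-Pisot numbers and an integrality argument for the trace) --- are real and unaddressed, but they are secondary to the failure of the Subspace step itself.
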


Let $\alpha>1$ be a real algebraic number and $h$ a positive integer 
such that $\alpha^h$ is a Pisot number of degree $d$.  
Then there exists a positive real number $\eta$ such that the modulus of any Galois conjugate (except itself)
of $\alpha^h$ is no greater than $\alpha^{- \eta}$. Let $n$ be a positive integer. 
Since the trace of $\alpha^{h n}$ is a rational integer, we get 
$\| \alpha^{h n} \| \le d \alpha^{- \eta n}$. 
This shows that the restriction in Theorem \ref{CZ} is necessary.

Theorems \ref{Mah} and \ref{CZ} are ineffective in the sense that their proof does not yield an 
explicit value for the integer $n_0$. 
To get an effective improvement on the trivial estimate $\| (r/s)^n \| \ge s^{-n}$, 
Baker and Coates \cite{BaCo75} (see also \cite{Bu02} and \cite[Section 6.2]{Bu18})
used the theory of linear forms in $p$-adic logarithms, for a prime number $p$ dividing~$s$.

\begin{theorem}\label{BC}
Let $r/s$ be a rational number greater than $1$ and which is not an integer. 
Then, there exist an effectively computable positive real number $\tau$ 
and an effectively computable integer $n_0$ such that 
$$
\| (r/s)^n \| > s^{- (1 - \tau) n}, 
$$
for every integer $n$ exceeding $n_0$. 
\end{theorem}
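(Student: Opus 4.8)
The plan is to turn the statement into a lower bound for a nonzero integer and then to contrast a $p$-adic valuation estimate with the theory of linear forms in $p$-adic logarithms, for a fixed prime $p$ dividing $s$. Let $a_n$ be the integer nearest to $(r/s)^n$ and set $c_n = r^n - a_n s^n$, so that $|c_n| = s^n \, \| (r/s)^n \|$. Since $\gcd(r,s)=1$ and $s \ge 2$ we have $s \nmid r$, hence $s^n \nmid r^n$ and $c_n$ is a nonzero integer; the desired bound $\| (r/s)^n \| > s^{-(1-\tau)n}$ is exactly $|c_n| > s^{\tau n}$, so it suffices to bound $|c_n|$ from below.

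Fix a prime $p \mid s$, put $e = v_p(s) \ge 1$, and note $p \nmid r$. The crucial elementary point is that $r^n - c_n = a_n s^n$, whence $v_p(r^n - c_n) \ge ne$, while $v_p(c_n) = 0$ because $v_p(r^n) = 0 < v_p(a_n s^n)$. Hence the nonzero rational $\Lambda = r^n c_n^{-1} - 1$ satisfies $v_p(\Lambda) = v_p(r^n - c_n) - v_p(c_n) \ge ne$, a quantity growing linearly in $n$.

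Now $\Lambda$ is a linear form in the two logarithms $\log r$ and $\log |c_n|$ with integer coefficients $n$ and $-1$ (a root of unity absorbs the sign of $c_n$), so a sharp effective lower bound for linear forms in two $p$-adic logarithms gives $v_p(\Lambda) \le C_1 \, \log r \, \log |c_n| \, (\log B')^2$, with $C_1$ effectively computable in terms of $p$ and $B' \asymp n/\log|c_n| + 1$. Comparing yields $ne \le C_1 \, \log r \, \log |c_n| \, (\log B')^2$. Arguing by contradiction, suppose $|c_n| \le s^{\tau n}$. In the critical range $\log|c_n| \asymp \tau n$ the parameter $B'$ is bounded, $(\log B')^2$ is a constant depending only on $\tau$ and $s$, and the inequality collapses to $e \le C_2 \, \tau \, \log r \, \log s$, which is false once $\tau$ is chosen small enough in terms of $r$, $s$, $p$. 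When $|c_n|$ is much smaller, $B' \asymp n$ and the inequality reads $ne \le C_1 \, \log r \, \log|c_n| \, (\log n)^2$, impossible for $n$ large; the degenerate cases $c_n = \pm 1$ succumb to the elementary lifting-the-exponent bound $v_p(r^n \mp 1) \ll_{r,p} \log n$, again contradicting $v_p(\Lambda) \ge ne$. In every case we obtain a contradiction for $n$ beyond an effectively computable $n_0$, so $|c_n| > s^{\tau n}$, as required.

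The hard part is securing a \emph{fixed} positive $\tau$ rather than $\tau = o(1)$, and this rests entirely on the sharpness in the coefficient $n$ of the two-logarithm estimate: since $n$ enters only through $(\log B')^2$ with $B' \asymp n/\log|c_n|$, this factor stays bounded precisely in the decisive regime $\log|c_n| \asymp \tau n$, so a fixed $\tau$ survives the comparison, whereas a cruder estimate carrying an unrestricted factor $\log n$ would deliver only $\| (r/s)^n \| > s^{-(1-o(1))n}$. Since every constant involved is effective, both $\tau$ and $n_0$ are effectively computable.
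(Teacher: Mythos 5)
The paper gives no proof of Theorem \ref{BC}: it is quoted from Baker--Coates, with the sharpening via linear forms in two $p$-adic logarithms credited to \cite{Bu02}, and your argument is exactly that cited route --- contrast $v_p\bigl(r^n c_n^{-1}-1\bigr)\ge n\,v_p(s)$ with a two-logarithm $p$-adic estimate whose dependence on $n$ enters only through $\log\bigl(n/\log|c_n|\bigr)$, so that a fixed $\tau$ survives in the regime $\log|c_n|\asymp\tau n$. Your sketch is correct up to routine normalizations (replace $\log|c_n|$ by $\max\{\log|c_n|,\log p\}$ in the height terms, and note that the conclusion of the critical case is really $e\le C\,\tau\,(\log(1/\tau))^2\log r\log s$, which still tends to $0$ with $\tau$), so it matches the intended proof and nothing further is needed.
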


The purpose of this note is to extend Theorem \ref{BC} to 
real algebraic numbers exceeding $1$. 
At first, we have to see which estimate   
follows from a Liouville-type inequality, which boild down to say that 
any nonzero rational integer has absolute value at least $1$. 
To simplify the discussion, take $\alpha$ a real algebraic 
integer greater than $1$ and of degree $d \ge 2$ such that 
each of its Galois conjugates $\alpha_2, \ldots , \alpha_d$ 
has absolute value at most equal to $\alpha$. 
For a positive integer $n$, let $A_n$ be the integer such that 
$$
\| \alpha^n \| = | \alpha^n - A_n |.   
$$
Observe that every Galois conjugate of $\alpha^n - A_n$ has modulus less than $3 \alpha^n$. 
Noticing that the 
absolute value of the norm of the nonzero algebraic integer $\alpha^n - A_n$ 
is at least equal to $1$, we deduce that 
$$
\| \alpha^n \| \ge 3^{-(d-1)} \, \alpha^{-n (d-1)}.     \eqno (1.1)
$$
This is much weaker than what follows from Theorem \ref{CZ}, but this is effective. 
For an arbitrary real algebraic number greater than $1$, a similar argument gives the 
following statement. In the sequel, an empty product is understood to be equal to~$1$. 


\begin{lemma}\label{Liouv}
Let $\alpha$ be a real algebraic 
number greater than $1$ and of degree $d \ge 1$. 
Let $a_d$ denote the leading 
coefficient of its minimal defining polynomial over $\Z$ and 
$\alpha_1, \ldots , \alpha_d$ its Galois conjugates, ordered in such a way 
that $|\alpha_1| \le \ldots \le |\alpha_d|$. Let $j$ be such that $\alpha = \alpha_j$. 
Set 
$$
C(\alpha) = a_d  \, \alpha^{d-1} \, \prod_{i > j} {|\alpha_i| \over  \alpha }.     
$$
If $\alpha$ is not an integer root of an integer, then we have
$$
\| \alpha^n\| \ge 3^{-(d-1)} \, C(\alpha)^{-n}, \quad \hbox{for $n \ge 1$}.   \eqno (1.2)
$$
Otherwise, (1.2) holds only for the positive integers $n$ such that 
$\alpha^n$ is not an integer. 
\end{lemma}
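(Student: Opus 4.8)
The plan is to generalize the Liouville-type argument sketched in the excerpt for the special case (algebraic integer with conjugates bounded by $\alpha$) to an arbitrary real algebraic number $\alpha > 1$ of degree $d$. The essential idea is to bound the norm of the nonzero algebraic number $\alpha^n - A_n$ from below by an integer quantity, and from above by a product of moduli of its conjugates, then compare the two bounds.

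First I would fix $n \ge 1$ and let $A_n$ be the nearest integer to $\alpha^n$, so that $\|\alpha^n\| = |\alpha^n - A_n|$. The conjugates of the algebraic number $\alpha^n - A_n$ are precisely $\alpha_i^n - A_n$ for $i = 1, \ldots, d$, since conjugating fixes the rational integer $A_n$ and sends $\alpha = \alpha_j$ to $\alpha_i$. The key inequality I would establish is that $\alpha^n - A_n$ is nonzero under the stated hypotheses: if $\alpha$ is not an integer root of an integer then $\alpha^n$ is irrational for every $n$ (as $\alpha^n$ rational would force $\alpha$ to satisfy $x^n = $ rational, making it an integer root of an integer), so $\alpha^n \ne A_n$; in the exceptional case we simply restrict to those $n$ for which $\alpha^n \notin \Z$. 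Then $\alpha^n - A_n \ne 0$.

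Next I would form the norm. Since $\alpha$ has minimal polynomial with leading coefficient $a_d$, the quantity $a_d(\alpha^n - A_n)$ (or more precisely $a_d^n$ times it, to clear denominators from all conjugates simultaneously) is an algebraic integer, so the absolute value of its norm is a nonzero rational integer and hence at least $1$. I would write
$$
1 \le \Bigl| \Norm(\alpha^n - A_n) \Bigr| \cdot a_d^{?} = a_d^{?} \prod_{i=1}^{d} |\alpha_i^n - A_n|,
$$
and then split the product: the factor $i = j$ contributes $\|\alpha^n\|$, while each factor with $i \ne j$ must be bounded above. The factors with $i < j$ (where $|\alpha_i| \le \alpha$) I would bound by $|\alpha_i^n - A_n| \le |\alpha_i|^n + A_n \le 3\alpha^n$ using $A_n \le \alpha^n + 1 \le 2\alpha^n$; the factors with $i > j$ (where $|\alpha_i|$ may exceed $\alpha$) I would bound by something like $|\alpha_i|^n + A_n \le 3 \max(|\alpha_i|, \alpha)^n$, which explains why the product $\prod_{i>j} |\alpha_i|/\alpha$ enters $C(\alpha)$. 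Collecting the $3$'s gives the factor $3^{-(d-1)}$ and collecting the $a_d$ and the $\alpha^{d-1}$ and the excess-conjugate product gives exactly $C(\alpha)^{-n}$ after solving for $\|\alpha^n\|$.

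The main obstacle I anticipate is the careful bookkeeping of the leading coefficient $a_d$ and the powers of $\alpha$ so that the bound comes out as $C(\alpha)^{-n}$ with the precise constant $C(\alpha) = a_d\, \alpha^{d-1} \prod_{i>j} |\alpha_i|/\alpha$ rather than a cruder expression. The subtlety is that $a_d(\alpha^n - A_n)$ is an algebraic integer only after the correct power of $a_d$ is supplied, so I must track exactly how many factors of $a_d$ are needed to make the full norm a rational integer, and then verify that these factors redistribute across the $d$ conjugates to yield precisely $a_d^{\,n}$ in $C(\alpha)^n$ (one factor of $a_d$ per conjugate, per power). A clean way to handle this is to note that $a_d \alpha_i$ are algebraic integers for all $i$, so $a_d^n(\alpha_i^n - A_n)$ times the remaining $a_d$ factors assemble correctly; I would verify the exponent of $a_d$ matches by a direct count, which is where the bulk of the routine-but-delicate computation lies.
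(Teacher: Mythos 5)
Your strategy is exactly the paper's: take the nearest integer $A_n$, observe that the relevant algebraic quantity built from $\alpha^n-A_n$ and its conjugates is a nonzero rational integer, hence of absolute value at least $1$, and bound each off-diagonal factor by $|\alpha_i^n-A_n|\le |\alpha_i|^n+\alpha^n+1\le 3\,(\max\{|\alpha_i|,\alpha\})^n$; the nonvanishing discussion (reduce to $\alpha^n\notin\Z$, which is automatic when $\alpha$ is not an integer root of an integer) also matches. However, the one step you yourself single out as the crux --- the exponent of $a_d$ --- is left genuinely open, and the tool you propose for it does not deliver the stated constant. From ``$a_d\alpha_i$ is an algebraic integer for every $i$'' you get that $a_d^n(\alpha_i^n-A_n)$ is an algebraic integer for each $i$, hence that $a_d^{dn}\prod_{i=1}^d(\alpha_i^n-A_n)$ is a nonzero rational integer; running your computation with that normalization yields (1.2) with $a_d^{d}$ in place of $a_d$ inside $C(\alpha)$, which is strictly weaker whenever $a_d\ge 2$ and $d\ge 2$. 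Your parenthetical ``one factor of $a_d$ per conjugate, per power'' is precisely this over-count.

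What is actually needed is the sharper classical fact that
$$
f_n(X) \;=\; a_d^{\,n}\,(X-\alpha_1^n)\cdots(X-\alpha_d^n)
$$
already has \emph{integer} coefficients; the paper simply evaluates this polynomial at $A_n$ and uses $|f_n(A_n)|\ge 1$. The point is that the coefficient $a_d^{\,n}e_k(\alpha_1^n,\ldots,\alpha_d^n)$ is an integer because $e_k(x_1^n,\ldots,x_d^n)$ is a symmetric polynomial of degree at most $n$ \emph{in each variable separately} (equivalently, $f_n$ is, up to sign, the resultant ${\rm Res}_Y\bigl(f(Y),\,Y^n-X\bigr)$ of two integer polynomials). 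This is standard but not a ``direct count'' from $a_d\alpha_i\in\overline{\Z}$, and without it your argument only proves a weaker inequality than the lemma asserts. A secondary, easily repaired slip: $\alpha^n$ rational does not force $\alpha$ to be an integer root of an integer (consider $\sqrt{3/2}$); what you need, and what does hold, is that $\alpha^n$ \emph{integral} forces it, and that is all the nonvanishing of $f_n(A_n)$ requires.
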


We will see how the theory of linear forms in logarithms allows us to slightly 
improve (1.2), unless there exists a positive integer $h$ such that $\alpha^h$ is an 
integer or a quadratic Pisot unit. 
In the latter case, $\alpha^h$ is a root of an integer polynomial of the shape $X^2 - a X + b$, 
with $a \ge 1$, $b \in \{-1, 1\}$, and $(a, b) \notin \{(1, 1), (2, 1)\}$, thus 
$\alpha = (a + \sqrt{a^2 - 4 b})/2$ and 
$\| \alpha^{hn} \| = \alpha^{-hn}$. 
Except in these cases, we establish the following effective strengthening of Lemma \ref{Liouv}.


\begin{theorem}\label{Main} 
Let $\alpha$ be a real algebraic 
number greater than $1$. 
Let $C(\alpha)$ be as in the statement of Proposition \ref{Liouv}. 
Let $h$ be the smallest positive integer such that $\alpha^h$ is an integer or a quadratic Pisot unit
and put $\cN_\alpha = \{ h n : n \in \Z_{\ge 1} \}$. 
If no such integer exists, then put $\cN_\alpha = \Z_{\ge 1}$. 
There exist a positive, effectively computable real number $\tau = \tau(\alpha)$ 
and an effectively computable integer $n_0 = n_0(\alpha)$, both 
depending only on $\alpha$,  such that 
$$
\| \alpha^n\| \ge C(\alpha)^{- (1 - \tau) n}, \quad \hbox{for $n > n_0$ in $\cN_\alpha$.} 
$$
\end{theorem}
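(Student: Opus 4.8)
The plan is to reduce the theorem to a lower bound for a single nonzero integer, and then to extract that bound from the theory of linear forms in logarithms. Write $P(X)=a_dX^d+\dots+a_0$ for the minimal polynomial of $\alpha$ over $\Z$ and let $A_n$ be the nearest integer to $\alpha^n$, so that $\|\alpha^n\|=|\alpha^n-A_n|$. The quantity controlling everything is the resultant
$$
R_n=\mathrm{Res}(P,X^n-A_n)=a_d^{\,n}\prod_{i=1}^d(\alpha_i^n-A_n)\in\Z ,
$$
which is nonzero for every $n\in\cN_\alpha$ beyond the exceptional values, and whose integrality is exactly what yields Lemma~\ref{Liouv}. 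Indeed, bounding each conjugate factor by $|\alpha_i^n-A_n|\le 3\max(\alpha,|\alpha_i|)^n$ gives $a_d^{\,n}\prod_{i\ne j}|\alpha_i^n-A_n|\le 3^{d-1}C(\alpha)^n$, whence $\|\alpha^n\|\ge |R_n|\,3^{-(d-1)}C(\alpha)^{-n}$. Thus it suffices to produce an effective $\tau'>0$ with $|R_n|\ge C(\alpha)^{\tau' n}$ for $n>n_0$ in $\cN_\alpha$: the constant $3^{d-1}$ is absorbed for large $n$ and the theorem follows with $\tau=\tau'/2$.

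To bring in fixed algebraic data I would first record that the integers $A_n$ satisfy a perturbed linear recurrence. Substituting $\alpha^{n+k}=A_{n+k}\pm\|\alpha^{n+k}\|$ into $\sum_k a_k\alpha^{n+k}=0$ gives $\sum_{k=0}^d a_k A_{n+k}=e_n$, where $e_n\in\Z$ and $|e_n|\le\tfrac12\sum_k|a_k|$ is bounded independently of $n$. Hence the varying integers $A_n$ are governed by the conjugates $\alpha_1,\dots,\alpha_d$, and the approximation $\alpha^n-A_n$ can be expressed through a linear form in the logarithms of fixed algebraic numbers. This is the device that tames the otherwise free integer $A_n$.

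The heart of the argument is an application of effective estimates for linear forms in logarithms, extending Baker and Coates. When $a_d>1$ I would work $p$-adically at a prime $p\mid a_d$: there $a_d^{\,n}A_n\equiv 0\pmod{p^{n v_p(a_d)}}$, so $R_n$ is $p$-adically forced close to $(a_d\alpha)^n$, i.e. a suitable form $\Lambda_n$ has $v_p(\Lambda_n)\ge cn$; when $\alpha$ is an algebraic integer one instead builds $\Lambda_n$ from the archimedean embeddings using the recurrence above. In either case the smallness of $\|\alpha^n\|$ makes $\Lambda_n$ small (or its valuation large), and a sharp two-variable estimate (Baker's theorem, respectively Yu's $p$-adic theorem) bounds this against the height of the varying quantity. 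The crucial point is that this height enters only through a factor $\log\!\big(n/\log|R_n|\big)$, so one reaches an inequality of the shape $cn\le C\,\log|R_n|\,\log\!\big(n/\log|R_n|\big)$. This is self-improving: iterating drives $\log|R_n|$ up to $\kappa n$ for an effective $\kappa>0$, which is precisely the exponential saving $|R_n|\ge C(\alpha)^{\tau' n}$ rather than a merely sub-exponential one.

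Finally one must isolate the degenerate configurations, and these are exactly the excluded cases. If $\alpha^h\in\Z$ then $\alpha^{hm}$ can itself be an integer and $R_{hm}$ degenerates, which is why the statement is restricted to $\cN_\alpha$; if $\alpha^h$ is a quadratic Pisot unit then $\|\alpha^{hn}\|=\alpha^{-hn}$ and no improvement is possible at all. In both situations the logarithms entering $\Lambda_n$ become linearly dependent and the form vanishes identically, so the effective estimate is vacuous. The main obstacle is precisely here: constructing a nonvanishing linear form with fixed bases, uniformly in $n$ and for every configuration $|\alpha_1|\le\dots\le|\alpha_d|$, verifying that the only obstructions are the two excluded families, and then squeezing the genuinely exponential gain out of the logarithmic-forms estimate. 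The case $a_d=1$ (algebraic integers such as Salem numbers), where no finite place is available and one is thrown back onto the archimedean side, is the most delicate part.
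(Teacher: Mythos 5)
Your global strategy is the same as the paper's: reduce the theorem to an exponential lower bound for the nonzero integer $R_n=a_d^n\prod_i(\alpha_i^n-A_n)$ (the paper's $f_n(A_n)$, respectively the $S$-norm of $\delta_n=\alpha^n-A_n$), and extract that bound from archimedean or $p$-adic linear forms in logarithms; your reduction $\|\alpha^n\|\ge |R_n|\,3^{-(d-1)}C(\alpha)^{-n}$ is correct. But the step you yourself call crucial --- that the height of the varying algebraic quantity enters the Baker-type estimate only through $\log\bigl(n/\log|R_n|\bigr)$, so that one lands on $cn\le C\log|R_n|\log\bigl(n/\log|R_n|\bigr)$ --- is asserted without a mechanism, and the perturbed recurrence $\sum_k a_kA_{n+k}=e_n$ does not supply one. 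The natural varying number here is $\delta_n$ itself, whose height is of order $n$; feeding that into Baker's theorem gives only $\log|\Lambda_n|\ge -c\,n\log n$, which is weaker than Liouville and self-improves to nothing. The paper's device is to take $S$ to be the infinite places together with the primes dividing $a_d$, choose a fundamental system of $S$-units $\eta_1,\dots,\eta_s$, and write $\delta_n=\gamma_n\eta_1^{b_1}\cdots\eta_s^{b_s}$ with $h(\gamma_n)\le \log N_S(\delta_n)/d+c_1$ and $\max_i|b_i|\le c_3n$ (via \cite[Propositions 4.3.9 and 4.3.12]{EvGy15}); only after this substitution is the height of the varying number comparable to $\log|R_n|$ rather than to $n$, and only then does your displayed inequality follow. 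Without this decomposition (or an equivalent) the core estimate of your proposal is unjustified.

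The second gap is the nonvanishing and case analysis, which you explicitly defer as ``the main obstacle'' but which is where most of the actual proof lives. One needs: Boyd's theorem (if $f$ has $m$ roots of maximal modulus, one real positive, then $f(X)=g(X^m)$) to organize the conjugates; the archimedean form $\Lambda_n=(\beta^n-A_n)/\beta^n$ when some $|\beta|>\alpha$, and $\Lambda'_n=(A_n-\beta^n)/\alpha^n$ when $f$ has a root $\beta\ne\alpha$ with $1\le|\beta|\le\alpha$ or suitable conjugates inside the unit disc; in the residual quadratic case $d/m=2$, a $p$-adic valuation argument showing the trace $\alpha^n+\sigma(\alpha)^n$ is not a rational integer when $\alpha$ is not an algebraic integer, and the exact evaluation $\|\alpha^n\|=|v|^{n/m}\alpha^{-n}$ when it is (whence the improvement for $|v|\ge2$ and the genuine exception $|v|=1$). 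Most seriously, the case $d=m$, i.e.\ $\alpha=(u/v)^{1/d}$, falls entirely outside your $\Lambda_n$ framework: all conjugates have equal modulus, so $(A_n-\beta^n)/\alpha^n$ is not close to $1$, and the paper must instead import effective irrationality measures for $u^{j/d}$ (including \cite[Theorem 1.2]{BeBu12} for $d=2$) when $v=1$, and run a $p$-adic argument at $p\mid v$ when $v\ge2$. Your sketch names the two excluded families correctly, but gives no argument that they are the only obstructions, and the root-of-an-integer case is not addressed at all. (Incidentally, Salem numbers are not the delicate case you fear: they have conjugates of modulus $1$ distinct from $\alpha$, so the $\Lambda'_n$ argument applies directly; the hard algebraic integers are the Pisot and $d$-th-root cases.) As written, the proposal is a correct plan for the generic configuration and a statement of the problem for the degenerate ones.
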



Theorem \ref{Main} should be compared with the effective improvement of Liouville's upper bound for 
the irrationality exponent of an irrational, algebraic real number. 
Recall that the irrationality exponent $\mu (\xi)$ of an irrational real number $\xi$ is given by
$$
\mu (\xi) = 1 + \limsup_{q \to + \infty} \, {- \log \| q \xi \| \over \log q}. 
$$
Its effective irrationality exponent $\mueff (\xi)$ is the infimum of the real numbers $\mu$ 
for which there exists an effectively computable positive integer $q_0$ such that the upper bound
$1 + (- \log \| q \xi \| )/ (\log q) \le \mu$ holds for $q \ge q_0$. 
Let $\alpha$ be an algebraic real number of degree $d \ge 2$. 
Roth's theorem asserts that $\mu (\alpha) = 2$, while Liouville's theorem 
says that $\mueff (\alpha) \le d$. 
By means of the theory of linear forms in logarithms, 
Feldman \cite{Fe68} proved the existence of an effectively computable positive 
real number~$\tau' = \tau'(\alpha)$, 
depending on $\alpha$, such that $\mueff (\alpha) \le (1 - \tau')d$. 

Here, the situation is similar. 
For a real number $\xi$ not an integer, nor a root of an integer, define 
$$
\nu (\xi) = \limsup_{n \to + \infty} \, {- \log \| \xi^n \| \over n}
$$
and let $\nueff (\xi)$ denote the infimum of the real numbers $\nu$ 
for which there exists an effectively computable integer $n_0$ such that 
$(- \log \| \xi^n \| )/n \le \nu$ for $n \ge n_0$. 

Let $\alpha > 1$ be an algebraic real number. 
Theorem \ref{CZ} asserts that $\nu (\alpha) = 0$, unless $\alpha$ is an integer root of a Pisot number. 
Lemma \ref{Liouv} says that $\nueff (\alpha) \le \log C(\alpha)$, while Theorem \ref{Main} 
slightly improves the latter inequality. Furthermore, the positive real number $\tau (\alpha)$ 
occurring in Theorem \ref{Main} is very small and of comparable size as 
the real number $\tau' (\alpha)$, when $\alpha$ is an algebraic integer (otherwise, it also 
depends on the prime factors of the leading coefficient of the minimal defining polynomial of $\alpha$
over $\Z$).

Among the many open questions on the function $\nu$, let us mention that we do not know 
whether $\nu ({\rm e})$ is finite or not (see \cite[Problem 13.20]{Bu18}
and \cite{BuDu08} for further results and questions). 
Mahler and Szekeres \cite{MaSz67} established that, with respect to the 
Lebesgue measure, almost all real numbers $\xi$ satisfy 
$\nu (\xi) = 0$. Furthermore, the set of real numbers $\xi$ 
such that $\nu (\xi)$ is infinite has Hausdorff dmension zero \cite[Theorem 3]{BuDu08}.

Sometimes, the hypergeometric method yields better improvements of (1.2). 
This is the case for the algebraic numbers $\sqrt{2}$ and $3/2$, see 
Beuker's seminal papers \cite{Beu80,Beu81} 
and the subsequent works \cite{BaBe02,Zu07} where it is shown that 
$$
\nueff ( \sqrt{2} ) \le 0.595, \quad \nueff ( 3/2) < 0.5443, 
$$
respectively. 

\goodbreak

\section{Proofs}

\begin{proof}[Proof of Lemma \ref{Liouv}] 
We keep the notation of the lemma and follow the proof of \cite[Assertion (a)]{MaSz67} 
with a slight improvement. 

Let $n$ be a positive integer. Observe that the polynomial 
$$
f_n(X) = a_d^n (X - \alpha_1^n) \cdots (X - \alpha_d^n)
$$
has integer coefficients and denote by $A_n$ the integer such that 
$$
\|   \alpha^n \| = | \alpha^n - A_n |.
$$
If $\alpha^n$ is not an integer, then 
$f(A_n)$ is a nonzero integer and we get
$$
|f(A_n)| \ge 1,  \eqno (2.1) 
$$
thus, 
$$
| \alpha^n - A_n | \ge a_d^{-n} \, \prod_{1 \le i \le d, i \not= j} \, |\alpha_i^n - A_n|^{-1}.  
$$
For $i = 1, \ldots , d$, note that 
$$
|\alpha_i^n - A_n| \le |\alpha_i|^n + \alpha^n + 1 \le 3 (\max\{|\alpha_i|, \alpha\})^n. 
$$
Consequently, we obtain the lower bound
$$
\| \alpha^n\| \ge 3^{-(d-1)} \, a_d^{-n} \, \alpha^{-(d-1)n} \, \prod_{i > j} {\alpha^n \over  |\alpha_i|^n},
$$
as claimed. 
This inequality reduces to (1.1) if $a_d = 1$ and $j = d$. 
\end{proof}

The proof of Theorem \ref{Main} makes use of the following result of 
Boyd \cite{Boyd94}.

\begin{lemma}\label{Boyd}
Let $f(X)$ be an irreducible polynomial of degree $d$ with integer coefficients. 
Let $m$ denote the number of roots of $f(X)$ of maximal modulus. 
Assume that one of these roots is real and positive. 
Then $m$ divides $d$ and there is an irreducible polynomial $g(X)$ with integer 
coefficients such that $f(X) = g(X^m)$.
\end{lemma}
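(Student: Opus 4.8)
The plan is to reduce everything to a single structural claim about the roots of maximal modulus and then read off the factorization. Write $\alpha=\alpha_1=R$ for the real positive root of maximal modulus $R$, let $S=\{\beta : f(\beta)=0,\ |\beta|=R\}$ be the set of $m$ maximal--modulus roots, and note that $S$ is stable under complex conjugation because $f\in\Z[X]$. First I would observe that one may assume $f$ monic: replacing the $\alpha_i$ by the conjugate algebraic integers $a_d\alpha_i$ (the roots of $a_d^{\,d-1}f(X/a_d)$) multiplies every modulus by $a_d$, hence leaves $m$ and the real positive maximal root unchanged, and a factorization of the monic model descends to $f(X)=g(X^m)$ after undoing the scaling. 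So assume the $\alpha_i$ are algebraic integers.

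The central claim I aim for is that $S=\{R\zeta:\zeta^m=1\}$, and the cleanest way to see that this already yields the theorem is via the substitution $X\mapsto X^m$. The map $\beta\mapsto\beta^m$ on the roots of $f$ is equivariant for the Galois group $G$ of the splitting field, which acts transitively both on the roots of $f$ and on the conjugates of $\alpha^m$; hence every conjugate of $\alpha^m$ has exactly $e:=[\Q(\alpha):\Q(\alpha^m)]$ preimages among the $\alpha_i$. Since $|\beta|=R$ iff $|\beta^m|=R^m$, counting the roots of $f$ of maximal modulus fibrewise gives $m=e\cdot t$, where $t$ is the number of conjugates of $\alpha^m$ of maximal modulus $R^m$. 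As $\alpha$ satisfies $X^m-\alpha^m$ over $\Q(\alpha^m)$ one always has $e\le m$, so $f(X)=g(X^m)$ with $g$ the minimal polynomial of $\alpha^m$ is equivalent to $e=m$, hence to $t=1$, hence to $S\subseteq R\mu_m$; and then $m\mid d$ drops out of the tower $d=[\Q(\alpha):\Q(\alpha^m)]\,[\Q(\alpha^m):\Q]=m\deg g$. Thus it suffices to prove that $\alpha^m=R^m$ is the \emph{unique} conjugate of $\alpha^m$ of maximal modulus, i.e.\ that $\beta^m=R^m$ for every $\beta\in S$.

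To establish $t=1$ I would try to show that each ratio $\omega_\beta:=\beta/R$, which lies on the unit circle, is actually an $m$-th root of unity, bringing in the arithmetic carried by the integer power sums $\Tr(\alpha^n)=\sum_i\alpha_i^n\in\Z$. For large $n$ these are dominated by $R^n\sum_{\beta\in S}\omega_\beta^{\,n}$, and the point is that the reality and positivity of the dominant root $R$ should constrain the phases $\omega_\beta$. The natural arithmetic tool to finish is Kronecker's theorem: a nonzero algebraic integer all of whose conjugates lie in the closed unit disc is a root of unity. This would be applied not to $\omega_\beta$ itself but to an auxiliary algebraic integer manufactured from the $\omega_\beta$.

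The hard part is precisely manufacturing that auxiliary integer and making Kronecker bite. Modulus is governed by complex conjugation, which is a single and in general non-central element of $G$, so it is not preserved by the remaining Galois automorphisms; consequently the conjugates of a bare ratio $\beta/R$ have uncontrolled moduli and Kronecker's theorem cannot be applied to $\beta/R$ directly. Moreover, even knowing that every $\omega_\beta$ is a root of unity is not enough on its own: one must further force these $m$ rotations to be exactly the group $\mu_m$ rather than merely some conjugation-stable set of roots of unity containing $1$. Both difficulties are exactly where the hypothesis that a maximal--modulus root is real and positive has to be spent in full, and overcoming them—rather than the surrounding algebra—is the real work of the proof.
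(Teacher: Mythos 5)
There is a genuine gap: your proposal never proves the lemma. The algebraic scaffolding is fine --- the reduction to the monic case, the fibre count $m=e\cdot t$ for the map $\beta\mapsto\beta^m$, and the observation that the conclusion is equivalent to showing $\beta^m=\alpha^m$ for every root $\beta$ of maximal modulus --- but the entire content of the lemma is concentrated in that last claim, and your last two paragraphs only describe obstacles to proving it (``I would try to show\dots'', ``the hard part is precisely manufacturing that auxiliary integer'') without overcoming them. You correctly diagnose both difficulties: Kronecker's theorem does not apply to $\beta/\alpha$ because its Galois conjugates need not lie on the unit circle, and even knowing each $\beta/\alpha$ is a root of unity would only give $f(X)\mid g_N(X^N)$ for some $N$ rather than the exact factorization $f(X)=g(X^m)$, unless one forces the set $\{\beta/\alpha\}$ to be precisely the group $\mu_m$. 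But diagnosing the difficulties is not resolving them. The missing idea is the multiplicative structure of the set $S$ of maximal--modulus roots: since $f$ has real coefficients and $\alpha>0$ is real of maximal modulus $R$, each $\beta\in S$ satisfies $\beta\bar\beta=\alpha^2$, and one must exploit such relations together with Galois transitivity to show that $\{\beta/\alpha:\beta\in S\}$ is closed under multiplication, hence a finite subgroup of the unit circle of order $m$, hence equal to $\mu_m$. That step is where the hypothesis that a maximal--modulus root is real and positive is actually used, and it is absent from your write-up.

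For the comparison you were asked to make: the paper itself gives no proof of this lemma at all --- it is quoted verbatim as a result of Boyd \cite{Boyd94} and used as a black box in the proof of Theorem \ref{Main}. So there is no internal argument to measure your sketch against; judged on its own, the proposal is an honest reduction followed by an acknowledged hole, not a proof.
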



\begin{proof}[Proof of Theorem \ref{Main}] 
We proceed in a similar way as when dealing with Thue equations. 
In view of Theorem \ref{BC} we assume that $\alpha$ is irrational. 
Let $K$ denote the number field $\Q (\alpha)$. 
Let $h$ denote the absolute Weil height. 
For convenience, we define the function $h^* ( \cdot ) = \max\{h (\cdot) , 1\}$. 
The constants $c_1, c_2, \ldots $ below are positive, effectively computable, 
and depend only on $\alpha$. 

Let $a_d$ denote the leading 
coefficient of the minimal defining polynomial of $\alpha$ over $\Z$
and $S$ the set of places of $K$ composed of all the infinite places and all the places 
corresponding to a prime ideal dividing $a_d$. 
Let $N_S$ denote the $S$-norm. 
We direct the reader to \cite[Chapter 1]{EvGy15} for definitions and basic results. 
Let us only mention that if the absolute value of the norm of a nonzero element $\beta$ in $K$ 
is written as $|\Norm_{K / \Q} (\beta)| = a_S b$, where every prime divisor 
of $a_S$ divides $a_d$ and no prime divisor of $b$ divides $a_d$, then $N_S (\beta) = b$. 
In particular, if $a_d = 1$, then $N_S$ is the absolute value of the norm $\Norm_{K / \Q}$. 

Let $n$ be a positive integer and $A_n$ denote the integer such that 
$$
\|  \alpha^n \| = | \alpha^n - A_n |.
$$
Put $\delta_n = \alpha^n - A_n$. 
We will obtain a lower bound of the form $\kappa^n$ with $\kappa > 1$ 
for the $S$-norm of the nonzero $S$-integer $\delta_n$. 
By replacing in the proof of Lemma \ref{Liouv} the right hand side of (2.1) by $\kappa^n$, we
obtain the expected improvement. 

Let $\eta_1, \ldots , \eta_s$ be a fundamental system of $S$-units in $K$. 
By \cite[Proposition 4.3.12]{EvGy15}, 
there exist integers $b_1, \ldots , b_s$ such that 
$$
h \bigl( \delta_n \eta_1^{-b_1} \cdots \eta_s^{-b_s} \bigr) 
\le {\log N_S (\delta_n) \over d} + c_1.      \eqno (2.2)
$$
Since 
$$
h(\delta_n) \le n h(\alpha) + \log A_n + \log 2 \le n h(\alpha) + n \log \alpha + 2 \log 2,
$$
it follows from \cite[Proposition 4.3.9 (iii)]{EvGy15} and (2.2) that 
$$
B := \max\{|b_1|, \ldots , |b_s| \} \le c_2  h^* (\delta_n) \le c_3 n. 
$$
Set $\gamma_n = \delta_n \eta_1^{-b_1} \cdots \eta_r^{-b_r}$.

Assume first that there exists a Galois conjugate $\beta$ of 
$\alpha$ such that $|\beta| > \alpha$ and consider the quantity
$$
\Lambda_n = { \beta^n - A_n \over   \beta^{n} }. 
$$
Observe that 
$$
0 < |\Lambda_n - 1| \le 2^{-c_4 n}.
$$
Let $\sigma$ denote the embedding sending $\alpha$ to $\beta$ and observe that 
$$
\Lambda_n  = \sigma (\gamma_n)  \beta^{-n} \sigma (\eta_1)^{b_1} \cdots \sigma (\eta_r)^{b_r}.
$$
We apply the theory of linear forms in logarithms: it follows from \cite[Theorem 2.1]{Bu18} that
$$
\log |\Lambda_n - 1| \ge - c_5 h^* (\gamma_n) \log \Bigl( {B + n \over h^* (\gamma_n)} \Bigr),
$$
giving
$$
n \le c_6 h^* (\gamma_n) \log \Bigl( {n \over h^* (\gamma_n)} \Bigr).
$$
We derive that 
$$
n \le c_7 h^* (\gamma_n) \le c_8 \log N_S (\delta_n)  + c_9, 
$$
thus
$$
N_S (\delta_n) \ge 2^{c_{10} n}, \quad \hbox{for $n \ge c_{11}$}. 
$$
This improves the trivial lower bound $N_S (\delta_n) \ge 1$ used 
in the proof of Lemma \ref{Liouv}. 

Secondly, we assume that the modulus of every Galois conjugate of $\alpha$ 
is less than or equal to $\alpha$. 
By Lemma \ref{Boyd}, there exist a divisor $m$ of $d$ and an irreducible integer polynomial $g(X)$
of degree $d/m$
such that $f(X)$ has exactly $m$ roots of modulus $\alpha$ 
and the minimal defining polynomial $f(X)$ over $\Z$ satisfies $f(X) = g(X^m)$. 

Assume that $d / m \ge 2$.
If $f(X)$ has a root $\beta$ of modulus at least equal to $1$ and different from $\alpha$, 
then $A_n - \alpha^n$ cannot be equal to $\beta^n$, thus the quantity 
$$
\Lambda'_n = {A_n -  \beta^n \over \alpha^{n} }   \eqno (2.3) 
$$
satisfies
$$
0 < |\Lambda'_n - 1| \le 2^{-c_{12} n}.   \eqno (2.4)
$$
We get a lower bound for $|\Lambda'_n - 1|$ by proceeding exactly as above, and it takes the 
same shape as our lower bound for $ |\Lambda_n - 1|$. 
We then deduce the lower bound 
$$
|N_S (\delta_n)| \ge 2^{c_{13} n}, \quad \hbox{for $n \ge c_{14}$}. 
$$

Now, we assume that all the roots of $f(X)$, except $\alpha$, lie in the open unit disc. 

If $\alpha$ has two real Galois conjugates in the open unit disc, then one of them, denoted by $\beta$, 
is such that the quantity $\Lambda'_n$ defined as in (2.3) is not equal to $1$
and (2.4) holds. We argue as above to get a similar lower bound 
for $N_S (\delta_n)$. 

If $d/m \ge 3$ and $\alpha^m$ has a complex nonreal Galois conjugate $\beta^m$ in the 
open unit disc, then $\beta^j$ is complex nonreal for every positive integer $j$ 
and we proceed as above, since the quantity $\Lambda'_n$ defined as in (2.3) is not equal to $1$. 

Consequently, we can assume that $d / m = 2$ and $g(X)$ is the minimal defining polynomial over $\Z$ 
of the quadratic number $\alpha^m$. 

If $n$ is not a multiple of $m$, then there exists a Galois conjugate $\beta$ of $\alpha$
such that $\beta^n$ is complex nonreal, thus the quantity 
$\Lambda'_n$ defined above is not equal to $1$, and we can proceed exactly as above 
to get a similar lower bound for $N_S (\delta_n)$. 

Assume now that $n$ is a multiple of $m$. 
Write $g(X) = a_2 X^2 - u X - v$. 
Denote by $\sigma(\alpha)$ the Galois conjugate of $\alpha$. 
If $\alpha$ is not an algebraic integer, then there exists a prime number $p$ such that $v_p (\alpha) < 0$. 
Since $v_p (\alpha) \le -1/2$, it follows from
\cite[Theorem B.11]{Bu18} that the $p$-adic valuation of 
$\alpha^n + \sigma(\alpha)^n$ satisfies
$$
v_p \bigl(  \alpha^n + \sigma(\alpha)^n \bigr) \le n v_p (\alpha) + c_{15} \log n \le - {n \over 3},
$$
for $n \ge c_{16}$. In particular, for $n$ greater than $c_{16}$, the algebraic number 
$\alpha^n +  \sigma(\alpha^n)$ cannot be a rational integer. 
Then, the quantity 
$\Lambda'_n$ defined above is not equal to $1$, and we can proceed exactly as above 
to get a similar lower bound for $N_S (\delta_n)$. 

If $\alpha$ is an algebraic integer, then $a_2 = 1$ and
$\alpha^n +  \sigma(\alpha^n)$ is equal to the nearest integer $A_n$
to $\alpha^n$. Thus, we have
$$
\| \alpha^n \| = | \sigma(\alpha^n)| = { |v|^{n/m} \over  \alpha^n},
$$
while Lemma \ref{Liouv} asserts that 
$$
\| \alpha^n \|  \ge 3^{-1}  \alpha^{-n}.   
$$
Consequently, we obtain the desired improvement on (1.2) if $|v| \ge 2$. 
As already noticed, (1.2) is essentially best possible if $|v| = 1$.

It only remains for us to consider the case $d=m$.
Then, there exist coprime nonzero integers $u$, $v$ 
with $u > v > 0$ 
such that the minimal defining polynomial of $\alpha$ over $\Z$ is $v X^d - u$. 
If $v = 1$, then $\alpha$ is the $d$-th root of the integer $u$. 
If $d=2$, then 
$$
\| \sqrt{u} u^m \| \ge u^{-( 1 - c_{17})m},  \quad \hbox{for $m \ge 1$}, 
$$
by \cite[Theorem 1.2]{BeBu12} (see also \cite[Theorem 6.3]{Bu18}). 
If $d \ge 3$ and $j = 1, \ldots , d-1$, then it follows from an effective improvement of Liouville's bound
$d$ for the irrationality exponent of $u^{j/d}$ (see \cite[Section 6.3]{Bu18}) that 
$$
\| u^{j/d} u^m \| \ge u^{-( 1 - c_{18})(d-1) m}, \quad \hbox{for $m \ge 1$}. 
$$
In both cases, noticing that $C(\root{d}\of{u}) = u^{(d-1)/d}$, we get 
$$
\| (\root{d}\of{u})^n \| \ge C(\root{d}\of{u})^{-( 1 - c_{19}) n}, 
\quad \hbox{for $n \ge 1$ not a multiple of $d$},
$$
as expected. 
Now, assume that $v \ge 2$. We argue in a similar way as in the proof of Theorem \ref{BC}. 
Let $p$ be a prime divisor of $v$. 
Write 
$$
{\delta_n \over v^{n/d} } = \biggl( \root{d}\of{ {u \over v}} \biggr)^n - A_n
$$
and note that the $p$-adic valuation of 
$$
\Omega_n = v^{n/d} A_n =   ( u^{1/d} )^n - \delta_n       
$$
satisfies $v_p (\Omega_n) \ge c_{20} n$. 
Let $L$ denote the number field generated by $u^{1/d}$ and $v^{1/d}$. Let $S$ be the 
set of places of $L$ composed of all the infinite places and all the places corresponding to 
a prime ideal dividing $v$. 
Let $\eta_1, \ldots , \eta_s$ be a fundamental system of units in $L$. 
By \cite[Proposition 4.3.12]{EvGy15}, 
there exist integers $b_1, \ldots , b_r$ such that 
$$
h \bigl( \delta_n \eta_1^{-b_1} \cdots \eta_r^{-b_r} \bigr) 
\le { \log N_S (\delta_n)  \over d}  + c_{21}.      \eqno (2.5)
$$
Since $h(\delta_n) \le c_{22} n$,
it follows from \cite[Proposition 4.3.9 (iii)]{EvGy15} and (2.5) that 
$$
B := \max\{|b_1|, \ldots , |b_r| \} \le c_{23}  h^* (\delta_n) \le c_{24} n. 
$$
Set $\gamma_n = \delta_n \eta_1^{-b_1} \cdots \eta_r^{-b_r}$ and note that 
$$
\Omega_n = ( u^{1/d} )^n - \gamma_n \eta_1^{b_1} \cdots \eta_r^{b_r}. 
$$
It follows from the theory of linear forms 
in $p$-adic logarithms, more precisely, from \cite[Theorem 2.11]{Bu18}, that 
$$
v_p (\Omega_n) \le c_{25} h^* (\gamma_n) \log \Bigl( {B + n \over h^* (\gamma_n)} \Bigr). 
$$
This gives
$$
n \le c_{26} h^* (\gamma_n) \log \Bigl( {n \over h^* (\gamma_n)} \Bigr),
$$
and we derive that 
$$
n \le c_{27} h^* (\gamma_n) \le c_{28} \log N_S (\delta_n)  + c_{29}, 
$$
thus
$$
N_S (\delta_n) \ge 2^{c_{30} n}, \quad \hbox{for $n \ge c_{31}$}. 
$$
This improves the trivial lower bound $N_S (\delta_n) \ge 1$ used 
in the proof of Lemma \ref{Liouv}. 
This concludes the proof of the theorem. 
\end{proof}

\end{document}